\def\eqref#1{equation~\ref{#1}}
\def\1{\bm{1}}
\DeclareMathAlphabet{\mathsfit}{\encodingdefault}{\sfdefault}{m}{sl}
\SetMathAlphabet{\mathsfit}{bold}{\encodingdefault}{\sfdefault}{bx}{n}
\newtheorem{proposition}{Proposition}
\def\d{\text{d}}
\def\vec{\mathrm{vec}}
\def\v{\text{vec}}
\title{Infinite-Horizon Differentiable Model \\ Predictive Control}
\author{%
  Sebastian East\thanks{Corresponding author.}  \\
  NNAISENSE, \\ \smallskip
  Lugano, Switzerland \\  
  University of Oxford, \\
  Oxford, United Kingdom\\  \smallskip
  \texttt{sebastian.east@} 
  % examples of more authors
  \And
  Marco Gallieri  \\
  NNAISENSE, \\
  Lugano, Switzerland \\  \smallskip
  \texttt{marco@nnaisense.com} 
  \And 
  Jonathan Masci \\
  NNAISENSE, \\
  Lugano, Switzerland \\  \smallskip
  \texttt{jonathan@nnaisense.com}\\
  \And
  Jan Koutn\'{i}k
   \\
  NNAISENSE, \\
  Lugano, Switzerland \\  \smallskip
  \texttt{jan@nnaisense.com} \\ 
  \And
  Mark Cannon \\ 
  University of Oxford, \\
  Oxford, United Kingdom\\  \smallskip
  \texttt{mark.cannon@eng.ox.ac.uk} \\
}
\author{
  Sebastian East\textsuperscript{1,2}\thanks{Corresponding author.} , Marco Gallieri\textsuperscript{1}, Jonathan Masci\textsuperscript{1}, Jan Koutn\'{i}k\textsuperscript{1} \& Mark Cannon\textsuperscript{2} \\
  \textsuperscript{1}NNAISENSE, Lugano, Switzerland \\
  \textsuperscript{2}Department of Engineering Science, University of Oxford, Oxford, UK \\
  \texttt{\{sebastian.east,mark.cannon\}@eng.ox.ac.uk} \\
  \texttt{\{marco,jonathan,jan\}@nnaisense.com} \\
}
\begin{document}

\maketitle

\begin{abstract}
This paper proposes a differentiable linear quadratic Model Predictive Control (MPC) framework for safe imitation learning. The infinite-horizon cost is enforced using a terminal cost function obtained from the discrete-time algebraic Riccati equation (DARE), so that the learned controller can be proven to be stabilizing in closed-loop. A central contribution is the derivation of the analytical derivative of the solution of the DARE, thereby allowing the use of differentiation-based learning methods. A further contribution is the structure of the MPC optimization problem: an augmented Lagrangian method ensures that the MPC optimization is feasible throughout training whilst enforcing hard constraints on state and input, and a pre-stabilizing controller ensures that the MPC solution and derivatives are accurate at each iteration. The learning capabilities of the framework are demonstrated in a set of numerical studies. 
\end{abstract}

\section{Introduction}
Imitation Learning (IL, \citealp{osa_algorithmic_2018}) aims at reproducing an existing control policy by means of a function approximator and can be used, for instance, to hot-start reinforcement learning. Effective learning and generalisation to unseen data are paramount to IL success, especially in safety critical applications.  Model Predictive Control (MPC, \citealp{Maciejowski_book, Camacho2007, rawlingsMPC,  Cannon_book, Gallieri2016,  Borrelli_book, Rakovic2019}) is the most successful advanced control methodology for systems with \emph{hard safety constraints}. At each time step, a finite horizon forecast is made from a predictive model of the system and the optimal actions are computed, generally relying on convex constrained Quadratic Programming (QP, \citealp{Boyd_convexopt, Bemporad2000}). Stability of the MPC in closed loop with the physical system requires the solution of a simpler unconstrained infinite horizon control problem \citep{rawlings_mayne_paper} which results in a value function (terminal cost and constraint) and a candidate terminal controller to be accounted for in the MPC forecasting. For Linear Time Invariant (LTI) models and quadratic costs, this means solving (offline) a Riccati equation \citep{KalmanLQR} or a linear matrix inequality \citep{Boyd_lmi}. Under these conditions, an MPC controller will effectively control a system, up to a certain accuracy, provided that uncertainties in the model dynamics are limited \citep{Limon2009}. Inaccuracies in the MPC predictions can reduce its effectiveness (and robustness) as the forecast diverges from the physical system trajectory over long horizons. This is particularly critical in applications with both short and long-term dynamics and it is generally addressed, for instance in robust MPC \citep{richards_a._g._robust_2004, Rakovic2012}, by using a controller to pre-stabilise the predictions.

This paper presents an infinite-horizon differentiable linear quadratic MPC that can be learned using gradient-based methods. In particular, the learning method uses an MPC controller where the terminal cost and terminal policy are the solution of an unconstrained infinite-horizon Linear Quadratic Regulator (LQR). A closed-form solution for the derivative of the Discrete-time Algebraic Riccati Equation (DARE) associated with the LQR is presented so that the stationary solution of the forward pass is fully differentiable. This method allows analytical results from control theory to be used to determine the stabilizing properties of the learned controller when implemented in closed-loop. Once the unconstrained LQR is computed, the predictive model is pre-stabilised using a linear state-feedback controller to improve the conditioning of the QP and the numerical accuracy of the MPC solution and gradients. The proposed algorithm successfully learns an MPC with both local stability and intrinsic robustness guarantees under small model uncertainties.  

\paragraph{Contributions}
This paper provides a framework for correctly learning an infinite-horizon, LTI quadratic MPC using recent developments in differentiable QPs \citep{amos_optnet:_2017}  and principles from optimal control \citep{Blanchini}. A~primary contribution is that the Discrete-time Algebraic Riccati Equation (DARE) is used to provide infinite-horizon optimality and stability, and an analytical derivative of the solution of the DARE is derived so that differentiation-based optimization can be used for training.  This connects known results on MPC stability \citep{Limon2003StableCM,Limon2009} and on infinite-horizon optimality \citep{Scokaert1998} to imitation learning \citep{osa_algorithmic_2018}.

A further contribution is the MPC control formulation: a pre-stabilizing linear state-feedback controller is implemented from the solution of the DARE, and then the total control input is obtained as a perturbation of the feedback control law from the solution of a convex QP. The pre-stabilizing controller ensures that the QP is well conditioned and promotes a highly accurate global solution, which in turn ensures that the gradients calculated in the backwards pass are accurate. Additionally, an augmented Lagrangian penalty method is used to enforce constraints on state and control input. This approach ensures that the hard constraints are strictly enforced if the penalty term is sufficiently large, and also guarantees that the MPC problem is feasible throughout the training process. These contributions are in contrast to \citep{amos_differentiable_2018} which did not consider state constraints, and implemented a differential dynamic programming \citep{tassa_control-limited_2014} method to solve the MPC optimization for which convergence could not be guaranteed.

The framework is implemented on a set of second order mass-spring-damper systems and a vehicle platooning model, where it is demonstrated that the infinite horizon cost can be learned and the hard constraints can be guaranteed using a short finite prediction horizon.

\paragraph{Notation}
$\mathbf{I}_n:=$ $n\times n$ identity matrix. $\mathbf{O}_{m \times n}:=$ $m \times n$ matrix of zeros. $\mathbf{0}_n:=$ a vector of $n$ zeros. $\mathbf{1}_n:=$ a vector of $n$ ones. All inequalities $\leq$ and $\geq$ are considered element-wise in the context of vectors. $\rho (A):=$ largest absolute eigenvalue of given matrix $A$. $\text{vec}: \mathbb{R}^{m \times n} \mapsto \mathbb{R}^{mn}$ is defined as $\text{vec}\left( [c_1  \cdots  c_n]  \right) := ( c_1, \cdots, c_n),$ i.e. the columns of a matrix stacked into a vector. For a matrix $A \in \mathbb{R}^{m \times n}$, the $\mathbf{V}_{m,n} \in \mathbb{R}^{mn \times mn}$ permutation matrix is implicitly defined by $\mathbf{V}_{m,n} \v A := \v A^\top$. The Kronecker product, $\otimes$, is defined as in \cite[pp. 440]{magnus99}.

\section{Differentiable MPC}\label{section_diff_mpc}
\paragraph{Linear quadratic MPC}

This paper considers linear time invariant systems of the form
\begin{equation}\label{system_dynamics}
x_{t+d t} = A x_t + B u_t,
\end{equation}
where $x_t \in \mathbb{R}^n$ is the system state, $u_t \in \mathbb{R}^m$ is the control input, $A\in \mathbb{R}^{n \times n}$ is the state transition matrix, $B \in \mathbb{R}^{n \times m}$ is the input matrix, $t \in \mathbb{R}$ is the time, and $d t \in \mathbb{R}$ is the timestep (assumed constant). The control problem for this system is to determine the sequence of values of $u_t$ that achieve a desired level of performance (e.g. stability, frequency response, etc...), and when the system is subject to hard constraints on control input, $u_t\in\mathbb{U}$, and state, $x_t\in\mathbb{X}$, (or a combination of both), a well studied framework for controller synthesis is MPC. The principle of MPC is that the system's control input and state are optimized over a finite prediction horizon, then the first element of the obtained control sequence is implemented at the current time step and the process is repeated \textit{ad infinitum}. For linear MPC it is common to use a quadratic stage cost and box constraints on state and control ( $\underline{x} \leq x_k \leq \overline{x}$ and $\underline{u} \leq u_k \leq \overline{u}$ where $\underline{u} \leq 0 \leq \overline{u}$), so that at each time index $t$ the vector of optimized control variables $\hat{u}^\star$ is obtained from

\begin{equation}\label{MPC_1}
\begin{aligned}
\hat{u}^{\star}_{0:N} = \underset{\hat{u}}{\text{argmin}} \ & \frac{1}{2}\sum_{k=0}^{N-1} \hat{u}_k^\top R \hat{u}_k + \frac{1}{2} \sum_{k=1}^{N-1} \hat{x}_k^\top Q \hat{x}_k + \frac{1}{2} \hat{x}_N^\top Q_N \hat{x}_N + k_u \sum_{k=0}^{N-1} \textbf{1}_m^\top r_k + k_x \sum_{k=1}^{N} \textbf{1}_n^\top s_k  \\
\text{s.t.} \ & \hat{x}_0 = x_t, \\
& \hat{x}_{k+1} = A \hat{x}_k + B \hat{u}_k,  \quad k \in \{0,\dots,N-1\}, \\
& \underline{u} - r_k \leq \hat{u}_k \leq \overline{u} + r_k \quad \text{and} \quad r_k \geq 0, \quad k \in \{ 0, \dotsm N-1 \}, \\
& \underline{x} - s_k \leq \hat{x}_k \leq \overline{x} + s_k \quad \text{and} \quad  s_k \geq 0, \quad k \in \{1,\dots,N\},
\end{aligned}
\end{equation}

where $\hat{u}_{0:N}$ is the predicted control trajectory, $\hat{x}$ is the predicted state trajectory, $R \in \mathbb{R}^{m \times m} \succeq 0$ represents the stage cost on control input, $Q \in \mathbb{R}^{n \times n} \succeq 0$ represents the stage cost on state,  $Q_N \in \mathbb{R}^{n \times n} \succeq 0$ represents the terminal cost on state, $N\in \mathbb{N}$ is the prediction horizon, $r_k\in \mathbb{R}^m$ are slack variables for the control constraint, $s_k \in \mathbb{R}^n$ are slack variables for the state constraint, and $k_u \in \mathbb{R} > 0$ and $k_x \in \mathbb{R} > 0$ represent the cost of control and state constraint violations. The variables $s$ and $r$ enforce the box constraints on state and control using the augmented Lagrangian method \cite[\S 17.2]{nocedal2006a}, and it can be shown that for sufficiently high $k_x$ and $k_u$ the constraints $\underline{x} \leq x_k \leq \overline{x}$ and $\underline{u} \leq u_k \leq \overline{u}$ can be \textit{exactly guaranteed} \citep{Kerrigan00softconstraints} (i.e. $s = r = 0$). The benefit of this approach is that it ensures that the MPC optimization is feasible at each iteration of the learning process, whilst still ensuring that the constraints are `hard'. To close the MPC control loop, at each timestep, $t$, the first element of the optimized control sequence, $\hat{u}^\star_0$, is implemented as $u_t$.

\paragraph{Pre-stabilised MPC}

If the control input is decomposed into $u_t = K x_t + \delta u_t$, where $K \in \mathbb{R}^{m \times n}$ is a \emph{stabilizing} linear state-feedback matrix and $\delta u_t$ is a perturbation to the feedback control, system (\ref{system_dynamics}) becomes
\begin{equation}\label{stable_system_dynamics}
x_{t+d t} = (A + BK) x_t + B \delta  u_t,
\end{equation}
and problem (\ref{MPC_1}) becomes
\begin{equation}\label{MPC_2}
\begin{aligned}
\delta  \hat{u}^{\star}_{0:N} = \underset{\delta\hat{u}}{\text{argmin}} \ & \frac{1}{2}\sum_{k=0}^{N-1} (K \hat{x}_k + \delta  \hat{u}_k)^\top R (K \hat{x}_k + \delta  \hat{u}_k) + \frac{1}{2} \sum_{k=1}^{N-1} \hat{x}_k^\top Q \hat{x}_k + \frac{1}{2} \hat{x}_N^\top Q_N \hat{x}_N \\
& + k_u \sum_{k=0}^{N-1} \textbf{1}_m^\top r_k + k_x \sum_{k=1}^{N} \textbf{1}_n^\top s_k \\
\text{s.t.} \ & \hat{x}_0 = x_t, \\
& \hat{x}_{k+1} = (A + BK) \hat{x}_k + B \delta \hat{ u}_k,  \quad k \in \{0,\dots,N-1\}, \\
& \underline{u} - r_k \leq K \hat{x}_k + \delta  \hat{u}_k \leq \overline{u} + r_k \quad \text{and} \quad r_k \geq 0, \quad k \in \{0,\dots, N-1\}, \\
& \underline{x}-s_k \leq \hat{x}_k \leq \overline{x}+s_k \quad \text{and} \quad  s_k \geq 0, \quad k \in \{1, \dots, N \},
\end{aligned}
\end{equation}

so that $\hat{u}^\star_0=Kx_t + \delta  \hat{u}^\star_0$ is implemented as $u_t$. Using this decomposition, system (\ref{stable_system_dynamics}) controlled with the solution of (\ref{MPC_2}) is \textit{precisely equal} to system (\ref{system_dynamics}) controlled with the solution of (\ref{MPC_1}), but problem (\ref{MPC_2}) is preferable from a computational standpoint if $A$ is open-loop unstable (i.e. $\rho (A)>1$) and $N$ is `large', as this can lead to poor conditioning of the matrices defined in Appendix \ref{appendix_MPC_1}. This is important in the context of differentiable MPC, as if $A$ is being learned then there may be no bounds on its eigenvalues at any given iteration.

\paragraph{MPC derivative.}

Problems (\ref{MPC_1}) and (\ref{MPC_2}) can be rearranged into the QP form (details in Appendix \ref{appendix_MPC_1})
\begin{equation}\label{QP}
\begin{aligned}
z^{\star} = \underset{z}{\text{argmin}} \  \frac{1}{2} z^\top Hz + q^\top z \quad \text{s.t.} \ \  l_b \leq Mz \leq u_b.
\end{aligned}
\end{equation}
When $z^\star$ is uniquely defined by (\ref{QP}), it can also be considered as the solution of an implicit function defined by the Karush-Kuhn-Tucker (KKT) conditions, and in \cite{amos_optnet:_2017} it was demonstrated that it is possible to differentiate through this function to obtain the derivatives of $z^\star$ with respect to the parameters $H$, $q$, $l$, $M$, and $u$. \footnote{Note that ($\ref{QP}$) differs from the form presented in \cite{amos_optnet:_2017}, and is instead the form of problem solved by the OSQP solver used in this paper. Appendix \ref{appendix_OSQP} demonstrates how to differentiate (\ref{QP}) using the solution returned by OSQP.} The MPC controller can then be used as a layer in a neural network, and backpropagation can be used to determine the derivatives of an imitation cost function with respect to the MPC parameters $Q$, $R$, $A$, $B$, $\underline{u}$, $\overline{u}$, $\underline{x}$, $\overline{x}$, $k_x$ and $k_u$. 

\paragraph{Imitation Learning.} A possible use case of the derivative of a model predictive controller is imitation learning, where a subset of $\{$cost function, system dynamics, constraints$\}$ are learned from observations of a system being controlled by an `expert'. Imitation learning can be performed by minimizing the loss
\begin{equation}\label{eq:imitation_loss_mixed}
     \frac{1}{T}\sum_{t=0}^{T} {\|u_{t:t+Ndt}-\hat{u}^\star_{0:N}(x_t)\|_2^2 + \beta \|\hat{w}_t\|_2^2}, 
\end{equation}
where $u_t$ is the measured control input, $\hat{u}^\star_{0:N}(x_t)$ is the full MPC solution, and $\beta\geq0$ is a hyperparameter. It is assumed that both the learning algorithm and MPC controller have completely precise measurements of both the state and control input. The first term of (\ref{eq:imitation_loss_mixed}) is the control imitation loss, and the second term penalises the one-step ahead prediction error $\hat{w}_t=Ax_t+Bu_t - x_{t+dt}.$ In practice, the prediction error loss might not be needed for the MPC to be learned correctly, however its use can be instrumental for stability, as discussed in the next section. 
\section{Terminal cost for infinite horizon}

% {\color{red} short intro}

\paragraph{Terminal cost.}

The infinite-horizon discrete-time Linear Quadratic Regulator (LQR, \citealp{KalmanLQR}) is given with state feedback gain 
\begin{equation}\label{LQR}
K =  - (R + B^\top P B)^{-1} B^\top P A,
\end{equation}
where $P$ is obtained as a solution of the DARE
\begin{equation}\label{DARE}
P = {A}^\top P {A} - {A}^\top P B (R + B^\top P B)^{-1} B^\top P {A} + Q.
\end{equation}
The principle of the approach presented in this paper is  the MPC controller (\ref{MPC_1},\ref{MPC_2}) is implemented with $Q_N = P$. Proposition \ref{th:stability} summarises the relevant properties of the proposed MPC, building on classic MPC results from  \cite{Scokaert1998,Limon2003StableCM,Limon2009}.   
\begin{proposition} \label{th:stability}
Consider the MPC problem (\ref{MPC_2}) with $Q_N=P$, where $P$ and $K$ solve (\ref{LQR}-\ref{DARE}). Define $V^\star_{N}(x)$ as the optimal objective in (\ref{MPC_2}) with $x_t = x$. Denote the optimal stage cost with $x_t = x$ as $\ell(x, \hat{u}^\star_{0}(x))=x^\top Qx
 +\hat{u}^\star_{0}(x)^\top R \hat{u}^\star_{0}(x)$. Then, for the closed-loop system, it follows that:
    \begin{enumerate}
     \item For any $\bar{N}\geq1$, there exists a closed and bounded set, $\Omega_{\bar{N}}$, such that, if $x_0\in\Omega_{\bar{N}}$ and $\hat{w}_t=0,\ \forall t\geq0$, then the MPC solution is infinite-horizon optimal for any $N\geq\bar{N}$. 
        \item There exist positive scalars $d$, $\alpha$, such that, for any $N\geq1$, if $\hat{w}_t=0,\ \forall t\geq0$ then the MPC constraints are feasible, $\forall t\geq0$, and the origin is asymptotically stable $\forall x_0\in\Gamma_{N}$, with 
        \begin{equation}
            \Gamma_{N} = \left\{x\in\mathbb{R}^n: V^\star_{N}(x)\leq \ell(x, \hat{u}^\star_{0}(x))+(N-1)d + \alpha \right\}.
        \end{equation}
        \item There exist a scalar, $\mu\geq0$, such that, for any $N\geq1$ the MPC constraints are robustly feasible,  $\forall t\geq0$, and the system is Input-to-State Stable (ISS) $\forall x_0\in\Gamma_{N}$ given an additive model error, $\hat{w}$, such that: $\|\hat{w}_t\|\leq\mu,\ \forall t\geq0$. In other words: 
        $$V^\star_{N}(x_{t+d t})\leq V^\star_{N}(x_t)-\ell(x_t, \hat{u}^\star_{0}(x_t))+\sigma(\|\hat{w}_t\|),$$ 
        for some strictly increasing, bounded function, $\sigma(\cdot)$, with $\sigma(0)=0$. 
       
        \item The QP matrices, $H$, $M$ and the vector $q$, in (\ref{QP}), have finite norms for any $N\geq1$. 
    \end{enumerate}
\end{proposition}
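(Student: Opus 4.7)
The plan is to prove the four clauses separately, each resting on a classical MPC tool tied together by the choice $Q_N=P$ with $K$ the associated LQR gain from (\ref{LQR}--\ref{DARE}).

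For clause 1, I would follow the Scokaert--Rawlings construction: define $\Omega_{\bar N}$ as (a subset of) the maximal positively invariant set of $x_{k+1}=(A+BK)x_k$ on which $x_k\in\mathbb{X}$ and $Kx_k\in\mathbb{U}$ hold for all $k\geq 0$ with $r_k=s_k=0$. For any $x_0$ in this set, the LQR rollout is a feasible candidate for (\ref{MPC_2}) at every horizon $N\geq\bar N$. The key algebraic identity is the DARE-induced telescoping $\sum_{k=0}^{\infty}\ell(x_k,Kx_k)=x_0^\top P x_0$ along the closed-loop system, which implies that with $Q_N=P$ the truncated MPC objective evaluated on the LQR trajectory equals the infinite-horizon cost. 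Since the LQR sequence is the unconstrained infinite-horizon optimum and is feasible here, it must also be the MPC optimum.

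For clause 2, I would use $V_N^\star$ as a Lyapunov candidate and build a feasible suboptimal warm-start at time $t+dt$ by shifting $\hat u^\star$ and appending $K\hat x_N^\star$. The DARE identity $(A+BK)^\top P(A+BK)-P=-Q-K^\top RK$ makes the terminal-cost drop pay exactly for the appended stage cost, so in the ``nominal'' sub-case where no new slack is needed the standard inequality $V_N^\star(x_{t+dt})\leq V_N^\star(x_t)-\ell(x_t,\hat u_0^\star(x_t))$ holds. The augmented-Lagrangian slacks can add at worst a bounded penalty at the newly appended step (this is $\alpha$) and at worst a bounded per-step correction along the already-shifted trajectory (this is $d$), both finite because the penalty coefficients $k_x,k_u$ are finite and the QP data are bounded on sublevel sets of $V_N^\star$. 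The set $\Gamma_N$ is then precisely the region where this slack budget is already ``paid for'' by the value function, so the strict Lyapunov decrease $V_N^\star(x_{t+dt})<V_N^\star(x_t)$ holds; invariance of $\Gamma_N$ and LaSalle give asymptotic stability of the origin.

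For clause 3, recursive feasibility is automatic because the soft-constrained (\ref{MPC_2}) admits large enough $r_k,s_k$ for \emph{any} $x_t\in\mathbb{R}^n$. For the ISS estimate, observe that (\ref{QP}) is a strictly convex parametric QP whose RHS depends linearly on the initial state, so $V_N^\star$ is locally Lipschitz in $x_t$; hence $V_N^\star\bigl(x_t+\hat w_t\bigr)\leq V_N^\star(x_t)+L\|\hat w_t\|$ on any sublevel set. Combining this with the nominal Lyapunov inequality from clause~2 and choosing $\sigma(s)$ to be, say, $\min\{Ls,\bar\sigma\}$ for a suitable cap $\bar\sigma$ produces a $\mathcal K$-class, bounded $\sigma$ with $\sigma(0)=0$, which is the desired ISS-Lyapunov inequality; the error bound $\mu$ is chosen so that the sublevel set on which the Lipschitz constant is valid is forward-invariant.

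For clause 4, the LQR gain $K$ makes $A+BK$ Schur, so there exist $c\geq 1$ and $\lambda\in(0,1)$ with $\|(A+BK)^k\|\leq c\lambda^k$. The QP data $H$, $M$, $q$ assembled as in Appendix~\ref{appendix_MPC_1} are block expressions in $\{(A+BK)^k\}_{k=0}^N$ and $B$, so their norms are dominated by convergent geometric sums and remain finite uniformly in $N$, which is exactly the advantage claimed in Section~\ref{section_diff_mpc}. The main obstacle is clause~2: bounding the slack contribution $(N-1)d+\alpha$ in closed form requires careful bookkeeping of how the shifted candidate interacts with the box constraints and with the penalty weights $k_x,k_u$, and ensuring that $\Gamma_N$ is non-empty and forward-invariant under the resulting inequality.
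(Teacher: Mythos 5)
Your proposal is correct and follows essentially the same route as the paper, which proves clause 1 by citing the Scokaert--Rawlings infinite-horizon argument, clauses 2--3 via the Limon et al.\ results (Lipschitz value function from the strictly convex QP plus the LQR contractive terminal set), and clause 4 from the Schur stability of $A+BK$ together with a spectral-radius-dominated matrix norm. The paper's own proof is essentially a set of citations, so your write-up is simply a fleshed-out version of the same standard constructions (shifted feasible candidate, DARE telescoping identity, geometric decay of $(A+BK)^k$).
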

\begin{proof}
Proof of Proposition \ref{th:stability} is given in Appendix  \ref{appendix_proofs}. \end{proof}

\paragraph{Implications.} Proposition \ref{th:stability} has some important implications. First, point 1 implies that there exists a state-dependant finite horizon length, $\bar{N}$, which is sufficient to make the MPC problem infinite-horizon optimal. This $\bar{N}$ can be upper bounded for a closed and bounded set of feasible states, $\Omega_{\bar{N}}$. \cite{Scokaert1998} proposed an iterative search that increases the horizon until optimality is verified; a similar algorithm is discussed in Appendix \ref{sec:horizon_reduction} where learning is completed with a large horizon and then iteratively reduced afterwards, although it is not implemented in this paper. Point 2,3 state that MPC that can provide stability and constraints satisfaction, hence \emph{safety}, if the model error is small. This also applies to small errors in the QP solution.  Finally, point 4 states that the QP matrices have finite norm when the system dynamics are pre-stabilised using the LQR gain\footnote{Note that any stabilising gain would be acceptable for the purpose of QP conditioning only.}, so the MPC problem is well conditioned and can be solved reliably to high accuracy, even over long horizons. If the open-loop system is unstable then the terms of the matrices in Appendix \ref{appendix_MPC_1} for the standard form are unbounded, so the QP solution may be poorly conditioned and the result inaccurate for long horizons. This can in turn invalidate the results of \cite{amos_optnet:_2017} which assumes that the KKT conditions are exactly satisfied in order to compute its gradients.

\paragraph{DARE Derivative.}

In order to implement $Q_N = P$ in a differentiable imitation learning framework such as that presented in Section \ref{section_diff_mpc}, the solution of the DARE is differentiated as follows.

\begin{proposition}\label{proposition_DARE_diff}
Let $P$ be the stabilizing solution of (\ref{DARE}), and assume that $Z_1^{-1}$ and $(R + B^\top P B)^{-1}$ exist, then the Jacobians of the implicit function defined by (\ref{DARE}) are given by
$$
\frac{\partial \vec P}{\partial \vec A} = Z_1^{-1} Z_2, \quad \frac{\partial \vec P}{\partial \vec B} = Z_1^{-1} Z_3, \quad \frac{\partial \vec P}{\partial \vec Q} = Z_1^{-1} Z_4, \quad \frac{\partial \vec P}{\partial \vec R} = Z_1^{-1} Z_5,
$$
where $Z_1,\dots,Z_5$ are defined by
\begin{equation*}
\begin{aligned}
Z_1 &:= \mathbf{I}_{n^2} - (A^\top \otimes A^\top ) \big[ \mathbf{I}_{n^2} - (PBM_2 B^\top \otimes \mathbf{I}_n ) - (\mathbf{I}_n \otimes PBM_2 B^\top ) \\
& \hspace{150pt} + (PB \otimes PB) (M_2 \otimes M_2) (B^\top \otimes B^\top ) \big] \\
Z_2 &:= ( \mathbf{V}_{n,n} + \mathbf{I}_{n^2}) (\mathbf{I}_n \otimes A^\top M_1) \\
Z_3 &:= (A^\top \otimes A^\top ) \big[ (PB \otimes PB) (M_2 \otimes M_2 )(\mathbf{I}_{m^2} + \mathbf{V}_{m,m})(\mathbf{I}_{m} \otimes B^\top P) \\
& \hspace{205pt} - (\mathbf{I}_{n^2} + \mathbf{V}_{n,n})(PBM_2 \otimes P) \big] \\
Z_4 &:= \mathbf{I}_{n^2} \\
Z_5 &:=  (A^\top \otimes A^\top ) (PB \otimes PB) (M_2 \otimes M_2 ),
\end{aligned}
\end{equation*}
and $M_1,M_2,M_3$ are defined by
$$
M_1 := P - P B M_2 B^\top P, \quad M_2 := M_3^{-1}, \quad M_3 := R + B^\top P B.
$$
\end{proposition}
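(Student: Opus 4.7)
The plan is to apply implicit differentiation to the DARE, vectorize with standard Kronecker identities, and solve the resulting linear system for $d\vec P$.

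First, I would rewrite the DARE compactly using the auxiliary quantities introduced in the statement, namely $P = A^\top M_1 A + Q$ with $M_1 = P - PBM_2 B^\top P$, $M_2 = M_3^{-1}$, $M_3 = R + B^\top P B$. I would then take total differentials, treating $A, B, Q, R, P$ as independent variables, and using the product rule together with $d(X^{-1}) = -X^{-1}(dX)X^{-1}$. This yields, after expansion,
\begin{equation*}
\begin{aligned}
dP &= (dA)^\top M_1 A + A^\top M_1 \, dA + dQ \\
   &\quad + A^\top \big[dP - (dP)BM_2 B^\top P - P B M_2 B^\top (dP) \\
   &\quad \quad\quad + PBM_2\bigl(dR + (dB)^\top PB + B^\top (dP) B + B^\top P (dB)\bigr) M_2 B^\top P \\
   &\quad \quad\quad - P (dB) M_2 B^\top P - PBM_2 (dB)^\top P \big] A.
\end{aligned}
\end{equation*}

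Next, I would vectorize both sides and rearrange using the two identities $\vec(XYZ) = (Z^\top \otimes X)\vec Y$ and $\vec(X^\top) = \mathbf{V}_{p,q}\vec X$, together with the mixed-product rule $(AC \otimes BD) = (A\otimes B)(C \otimes D)$ and the commutation identity $\mathbf{V}_{m,n}(A\otimes B) = (B\otimes A)\mathbf{V}_{p,q}$. The symmetry of $P$, $M_1$, $M_2$ will be used throughout to consolidate terms. Collecting all terms proportional to $d\vec P$ on the left gives a factor
\begin{equation*}
I_{n^2} - (A^\top \otimes A^\top)\bigl[I_{n^2} - (PBM_2 B^\top \otimes I_n) - (I_n \otimes PBM_2 B^\top) + (PB \otimes PB)(M_2 \otimes M_2)(B^\top \otimes B^\top)\bigr],
\end{equation*}
which is precisely $Z_1$. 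Under the hypothesis that $Z_1$ is invertible (which is the implicit function theorem condition for this system) I can then solve for $d\vec P$.

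The contributions to the right-hand side give $Z_2, Z_3, Z_4, Z_5$. For $d\vec Q$, one reads off $Z_4 = I_{n^2}$ directly. For $d\vec A$, the two terms $(dA)^\top M_1 A + A^\top M_1 (dA)$ vectorize, using symmetry of $M_1$ and the transpose identity, to $(\mathbf{V}_{n,n}+I_{n^2})(I_n \otimes A^\top M_1)d\vec A$, giving $Z_2$. For $d\vec R$, the single contribution $A^\top PBM_2\,dR\,M_2B^\top PA$ vectorizes straightforwardly to $Z_5$ after factoring out $(A^\top \otimes A^\top)$ and $(PB \otimes PB)(M_2 \otimes M_2)$.

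The main obstacle is the $d\vec B$ derivation giving $Z_3$, since $B$ appears in four distinct positions (in $PB$, $B^\top P$, and twice inside $M_3$). After expanding, there are four terms linear in $dB$ and two linear in $(dB)^\top$. Each $(dB)^\top$ contribution must be converted via $\vec(X^\top) = \mathbf{V}\vec X$ and then rearranged using $\mathbf{V}_{m,n}(A\otimes B) = (B\otimes A)\mathbf{V}_{p,q}$ so that all contributions can be written in terms of $d\vec B$ with a common outer factor $(A^\top \otimes A^\top)$. Careful bookkeeping then shows that the terms with matched positions coalesce into $(\mathbf{I}_{m^2}+\mathbf{V}_{m,m})(\mathbf{I}_m \otimes B^\top P)$ multiplied by $(PB\otimes PB)(M_2\otimes M_2)$, while the remaining terms coalesce into $(\mathbf{I}_{n^2}+\mathbf{V}_{n,n})(PBM_2 \otimes P)$, reproducing exactly the expression stated for $Z_3$. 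Multiplying by $Z_1^{-1}$ finishes the proof, with the invertibility of $M_3 = R+B^\top P B$ used implicitly throughout via $M_2$.
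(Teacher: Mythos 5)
Your proposal is correct and follows essentially the same route as the paper's proof: implicit differentiation of the DARE written as $P = A^\top M_1 A + Q$, vectorization via Kronecker-product and commutation-matrix identities, collection of the $\d\vec P$ terms into $Z_1$, and inversion of $Z_1$ to read off the Jacobians. The only (cosmetic) difference is that the paper keeps the differentials $\d\vec M_1$, $\d\vec M_2$, $\d\vec M_3$ as separate intermediate quantities and combines them with the differential chain rule, whereas you expand the full matrix differential in one step before vectorizing.
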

\begin{proof} The proof of Proposition \ref{proposition_DARE_diff} is given in Appendix \ref{appendix_DARE_derivative}.
\end{proof}

The sensitivity of the DARE solution has been investigated in the context of robustness to perturbations in the input matrices, e.g. \cite{riccati_sensitivity_Sun, konstantinov1993perturbation}, and the analytical derivative of the continuous-time algebraic Riccati equation was derived in \cite{riccati_derivative_brewer} by differentiating the exponential of the Hamiltonian matrix, but to the best of the authors' knowledge this is the first presentation of an analytic derivative of the DARE using the differential calculus approach of \cite{magnus99}.

\begin{wrapfigure}{R}{7cm}
    \noindent\begin{minipage}{0.5\columnwidth}
      \vspace{-1.1cm}
        \begin{algorithm}[H]
          \DontPrintSemicolon
            \KwInput{$\mathcal{M} \setminus \mathcal{S}$, $N>0$, $\beta\geq0$, $N_{\text{e}}>0$. \textbf{Out:} $\mathcal{S}$ }
            %\KwOutput{$\mathcal{S}$}
            \small
            \caption{Infinite-horizon MPC Learning}
            \label{alg:alternateDescent}
            \For{$i=0...N_{\text{e}}$}{ 
                \textbf{Forward Pass} \;
                
                $(K,\ P)\leftarrow$ \text{DARE (\ref{LQR}-\ref{DARE}) solution} \;
                
                $Q_T \gets P$ \;
                
                $\hat{u}_{0:N}^\star \gets$ \text{MPC QP  (\ref{stable_system_dynamics}-\ref{QP}) solution} \;
                
                $L\gets$  \text{Imitation loss  (\ref{eq:imitation_loss_mixed})}\; 
                
                \textbf{Backward Pass} \;
                
                \text{Differentiate loss (\ref{eq:imitation_loss_mixed})} \;
                
                \text{Differentiate MPC QP solution, $\hat{u}_{0:N}^\star$,}\; \text{using Appendix \ref{appendix_OSQP} }\;
                
                \text{Differentiate DARE, $(P, K)$,}\; 
                \text{using Proposition \ref{proposition_DARE_diff}} \;
                
                \textbf{Update step} \;
                
                $\mathcal{S}\gets$ \text{Gradient-based step} \;
            }
        \end{algorithm}
    \end{minipage}
    \vspace{-1.0cm}
 \end{wrapfigure}

\paragraph{Algorithm overview}
Algorithm \ref{alg:alternateDescent} presents the overall procedure for learning a subset, $\mathcal{S}$, of the MPC controller parameters, $\mathcal{M} = \{A, B, Q, R, \underline{x}, \overline{x}, \underline{u}, \overline{u}, k_u, k_x \}$, with the key steps of the forwards and backwards pass of a gradient-based optimization method. In each forward pass the MPC terminal cost matrix, $Q_N$, and the pre-stabilizing controller, $K$, are set from the solution of the DARE, then the DARE and MPC QP solutions are differentiated in the backward pass to obtain the gradients. Note that the horizon, $N$, is not differentiable, and that learning the entire set $\mathcal{M}$ simultaneously is challenging in general.

%\newpage

\section{Numerical Experiments}\label{sec:experiments}
In this section the performance of the algorithm was demonstrated through numerical experiments in two test cases: firstly on a set of second order mass-spring-damper models to provide a performance baseline in an easily interpretable setting, and then on vehicle platooning problem to investigate a higher-dimensional real-world application.

\subsection{Mass-Spring-Damper}\label{sec_MSD}

\paragraph{Model \& Expert}
Expert data was generated using a mass-spring-damper model parameterized by a mass, $m \in \mathbb{R} > 0$, damping coefficient, $c \in \mathbb{R}$, stiffness, $k \in \mathbb{R}$, and timestep $dt\in \mathbb{R} > 0$, where
$$
A = \text{exp}(A_c dt), \quad A_c = \begin{bmatrix} 0 & 1 \\ -\frac{k}{m} & -\frac{c}{m} \end{bmatrix}, \quad B = (A - I_n)A_c^{-1}B_c, \quad B_c = \begin{bmatrix} 0 \\ \frac{1}{m} \end{bmatrix},
$$
so that $x_t \in \mathbb{R}^2$ is the position and velocity of the mass, and the $u_t \in \mathbb{R}$ is a force applied to the mass. 

\begin{wraptable}{R}{0.55\textwidth}
  \vspace{-0.4cm}
  \caption{Damping coefficient $c$ used to generate the seven imitation systems.}
  \label{coefficient-table}
  \centering
  \begin{tabular}{lccccccc}
    \toprule
    System     & $1$ & $2$ & $3$ & $4$ & $5$ & $6$ & $7$ \\
    \midrule
    $c$ & 1 & 0.5 & 0.1 & -0.1 & -0.3 & -0.5 & -0.6 \\
    \bottomrule
  \end{tabular}
%   \vspace{0}
\end{wraptable}

Seven models were created with $m=1$, $k=1$, and $dt = 0.2$, and $c$ was varied as shown in Table \ref{coefficient-table} to affect the open-loop stability of the models ($c>0 \Rightarrow$ stable, $c<0 \Rightarrow$ unstable). The expert data was then generated by simulating each of the systems the initial condition $x_0 = (0,3)$ in closed-loop with an infinite-horizon MPC controller (i.e. the horizon was increased until the open-loop state predictions matched the closed-loop response), using $Q = \text{diag}([1,1])$, $R = 2$, $(\underline{u},\overline{u}) = (-\infty,0.5)$, $\underline{x} = (-1, -\infty)$, and $\overline{x}) = (1, \infty)$. The constraint set was chosen so that the constraints on both state and control input were strongly active at the solution whilst ensuring that the expert MPC optimization was feasbile. The values $k_u = k_x = 100$ were found to be sufficient to enforce the hard constraints and were used for all experiments. It is important to note that the approach of \citep{amos_differentiable_2018} cannot be used reliably for even this simple example as it does not consider state constraints, and when hard constraints are added to the method it fails in general because the optimization problem has become infeasible in the forwards pass at some time $t$. 
% For a large enough $N$, our method provides constraint satisfaction $\forall t$.  

\begin{wrapfigure}{R}{0.65\textwidth}
\vspace{-0.65cm}
    \centering
    \includegraphics[scale=0.83]{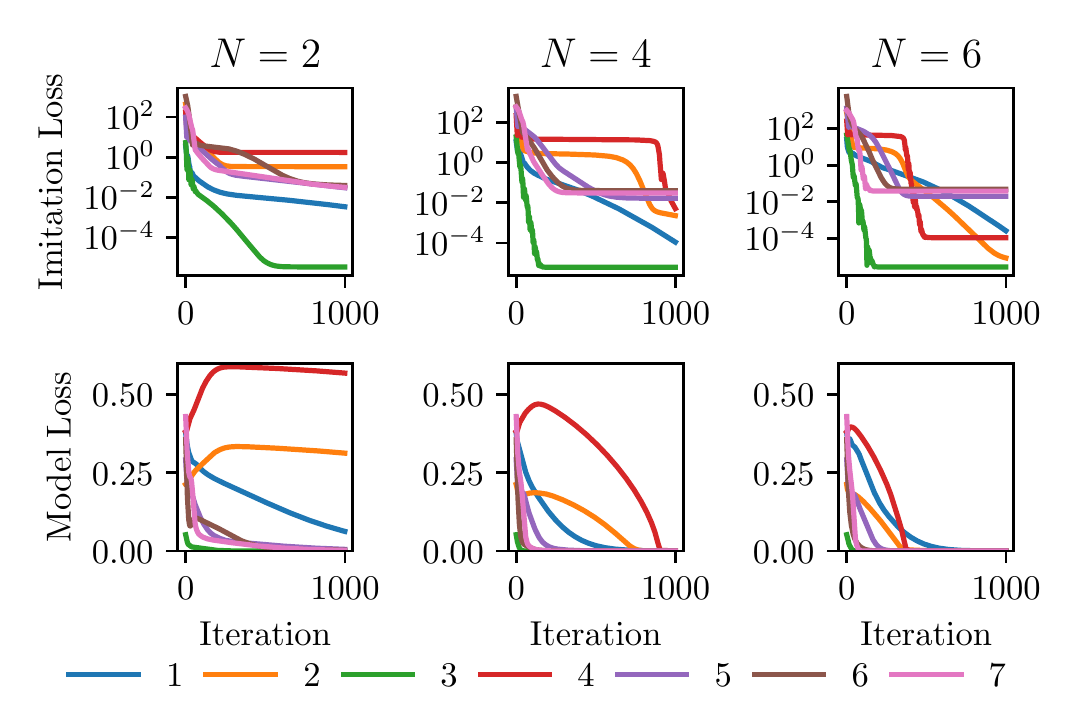}
    \caption{{\bf Mass-spring-damper. Imitation loss and model loss at each iteration of the training process.} Top row: imitation loss. Bottom row: model loss given by $\|\v A - \v A_j \|^2_2$, where $A_j$ is the learned model at iteration $j$, and $A$ is the correct model. The model loss was \textit{not} used as part of the training process, and shown only to indicate whether the model is converging correctly.}
    \label{fig:loss}
    \vspace{-0.35cm}
\end{wrapfigure}

\paragraph{Learning}
The learner and expert shared all system and controller information apart from the state transition matrix $A$, which was learned, and the MPC horizon length, which was implemented as each of $N \in \{ 2,3,6 \}$ in three separate experiments. $A$ was initialized with the correct state transition matrix plus a uniformly distributed pseudo-random perturbation in the interval $[-0.5,0.5]$ added to each element. The learner was supplied with the first 50 elements of the closed loop state trajectory and corresponding controls as a batch of inputs, and was trained to minimize the imitation loss (\ref{eq:imitation_loss_mixed}) with $\beta = 0$, i.e. the state dynamics were learned using predicted control trajectories \textit{only}, and the state transitions are not made available to the learner (this is the same approach used in \citealp{amos_differentiable_2018}). The experiments were implemented in Pytorch 1.2.0 using the built-in Adam optimizer \citep{kingma2015a} for 1000 steps using default parameters. The MPC optimization problems were solved for the `expert' and `learner' using OSQP \citep{osqp} with settings (eps\_abs=1E-10, eps\_rel=1E-10, eps\_rim\_inf=1E-10, eps\_dual\_inf=1E-10).

% \vspace{-0.2cm}
\paragraph{Results} 
\begin{comment}
\begin{wrapfigure}{R}{0.65\textwidth}
\vspace{-0.65cm}
    \centering
    \includegraphics[scale=0.83]{fig_model_loss.pdf}
    \caption{{\bf Mass-spring-damper. Imitation loss and model loss at each iteration of the training process.} Top row: imitation loss. Bottom row: model loss given by $\|\v A - \v A_j \|^2_2$, where $A_j$ is the learned model at iteration $j$, and $A$ is the correct model. The model loss was \textit{not} used as part of the training process, and shown only to indicate whether the model is converging correctly.}
    \label{fig:loss}
    \vspace{-0.35cm}
\end{wrapfigure}
\end{comment}

Figure \ref{fig:loss} shows the imitation and model loss at each of the 1000 optimization iterations for each of the tested horizon lengths. It can be seen that for all of the generated systems the imitation loss converges to a low value, although this is a local minimum in general. In most cases, the learned model converges to a close approximation of the real model, although as the problem is non-convex this cannot be guaranteed, and it is also shown that there are some cases in which the model does not converge correctly. This occurred exclusively for $N=2$, where neither system $4$ nor system $2$ converge to the correct dynamics. Additionally, it can be seen that both the imitation loss and model loss converge faster as the prediction horizon is increased. This suggests that a longer learning horizon improves the learning capabilities of the methods, but there is not sufficient data to demonstrate this  conclusively. 
%\newpage

\begin{wrapfigure}{r}{0.65\textwidth}
\vspace{-0.7cm}
    \centering
    \includegraphics[scale=0.83]{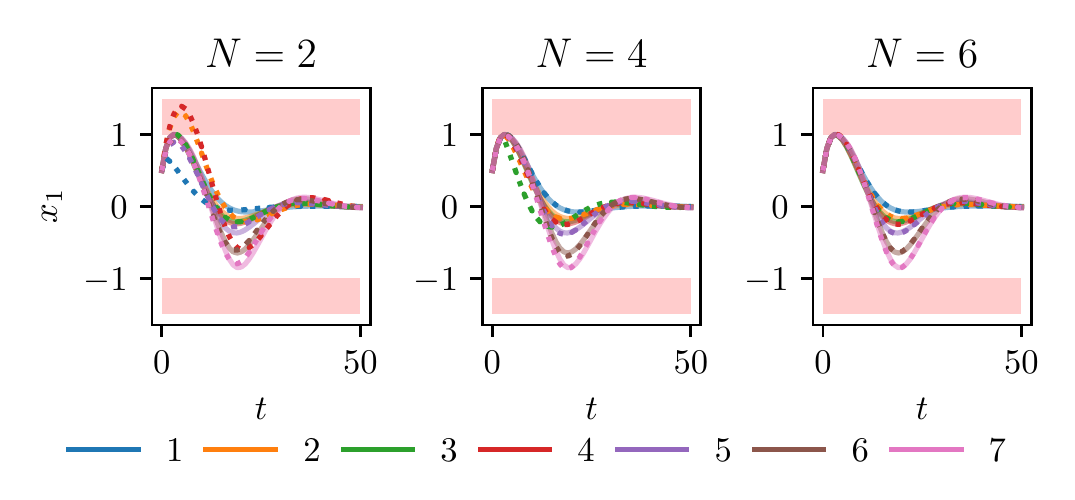}
    \caption{\textbf{Mass-spring-damper. Closed-loop trajectories using the expert and learned controllers.} Trajectories only shown for $x_1$ (position), but $x_2$ (velocity) can be inferred. Expert controllers shown with solid lines, and learned controller shown with dotted lines. The hard state constraints are shown in the red regions.\label{fig:results}}
\vspace{-0.25cm}
\end{wrapfigure}

To test generalization performance, each of the systems was re-initialized with initial condition $x_0 = (0.5,2)$ and simulated in closed loop using the learned controller for each horizon length. The results are compared in Figure \ref{fig:results} against the same systems controlled with an infinite horizon MPC controller. The primary observation is that as the learned MPC horizon is increased to $N = 6$, the closed loop trajectories converge to expert trajectories, indicating that the infinite horizon cost has been learned (when using the infinite horizon cost with no model mismatch or disturbance, the predicted MPC trajectory is exactly the same as the closed loop trajectory), and that the state constraints are guaranteed for $N \geq 4$. Furthermore, it can be seen that the learned controllers are stabilizing, even for the shortest horizon and the most unstable open-loop systems. This is also the case for systems $2$ and $4$ where the incorrect dynamics were learned, although in this case the state constraints are not guaranteed for $N=2$.

\subsection{Vehicle Platooning}\label{sec_main_vehicle_platoon}

\begin{wrapfigure}{r}{0.50\textwidth}
\vspace{-0.6cm}
    \centering
    %% Creator: Inkscape inkscape 0.92.4, www.inkscape.org
%% PDF/EPS/PS + LaTeX output extension by Johan Engelen, 2010
%% Accompanies image file '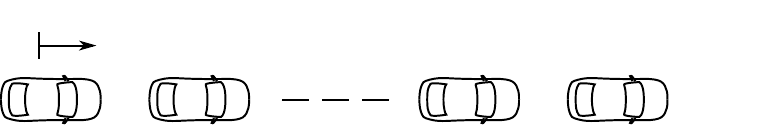' (pdf, eps, ps)
%%
%% To include the image in your LaTeX document, write
%%   \input{<filename>.pdf_tex}
%%  instead of
%%   \includegraphics{<filename>.pdf}
%% To scale the image, write
%%   \def\svgwidth{<desired width>}
%%   \input{<filename>.pdf_tex}
%%  instead of
%%   \includegraphics[width=<desired width>]{<filename>.pdf}
%%
%% Images with a different path to the parent latex file can
%% be accessed with the `import' package (which may need to be
%% installed) using
%%   \usepackage{import}
%% in the preamble, and then including the image with
%%   \import{<path to file>}{<filename>.pdf_tex}
%% Alternatively, one can specify
%%   \graphicspath{{<path to file>/}}
%% 
%% For more information, please see info/svg-inkscape on CTAN:
%%   http://tug.ctan.org/tex-archive/info/svg-inkscape
%%
\begingroup%
  \makeatletter%
  \providecommand\color[2][]{%
    \errmessage{(Inkscape) Color is used for the text in Inkscape, but the package 'color.sty' is not loaded}%
    \renewcommand\color[2][]{}%
  }%
  \providecommand\transparent[1]{%
    \errmessage{(Inkscape) Transparency is used (non-zero) for the text in Inkscape, but the package 'transparent.sty' is not loaded}%
    \renewcommand\transparent[1]{}%
  }%
  \providecommand\rotatebox[2]{#2}%
  \newcommand*\fsize{\dimexpr\f@size pt\relax}%
  \newcommand*\lineheight[1]{\fontsize{\fsize}{#1\fsize}\selectfont}%
  \ifx\svgwidth\undefined%
    \setlength{\unitlength}{218.91610453bp}%
    \ifx\svgscale\undefined%
      \relax%
    \else%
      \setlength{\unitlength}{\unitlength * \real{\svgscale}}%
    \fi%
  \else%
    \setlength{\unitlength}{\svgwidth}%
  \fi%
  \global\let\svgwidth\undefined%
  \global\let\svgscale\undefined%
  \makeatother%
  \begin{picture}(1,0.16342989)%
    \lineheight{1}%
    \setlength\tabcolsep{0pt}%
    \put(0,0){\includegraphics[width=\unitlength,page=1]{car_diagram_scaled.pdf}}%
    \put(0.08624992,0.12124763){\color[rgb]{0,0,0}\makebox(0,0)[lt]{\lineheight{1.25}\smash{\begin{tabular}[t]{l}$y_1$\end{tabular}}}}%
    \put(0,0){\includegraphics[width=\unitlength,page=2]{car_diagram_scaled.pdf}}%
    \put(0.28157078,0.12124763){\color[rgb]{0,0,0}\makebox(0,0)[lt]{\lineheight{1.25}\smash{\begin{tabular}[t]{l}$y_2$\end{tabular}}}}%
    \put(0,0){\includegraphics[width=\unitlength,page=3]{car_diagram_scaled.pdf}}%
    \put(0.63669956,0.12124763){\color[rgb]{0,0,0}\makebox(0,0)[lt]{\lineheight{1.25}\smash{\begin{tabular}[t]{l}$y_{n_v-1}$\end{tabular}}}}%
    \put(0,0){\includegraphics[width=\unitlength,page=4]{car_diagram_scaled.pdf}}%
    \put(0.83202039,0.12124763){\color[rgb]{0,0,0}\makebox(0,0)[lt]{\lineheight{1.25}\smash{\begin{tabular}[t]{l}$y_{n_v}$\end{tabular}}}}%
    \put(-0.21164732,0.50681195){\color[rgb]{0,0,0}\makebox(0,0)[lt]{\begin{minipage}{1.40464764\unitlength}\raggedright \end{minipage}}}%
  \end{picture}%
\endgroup%

    \caption{\textbf{Platoon Model.} $n_v$ vehicles in 1 degree of freedom where $y$ is longitudinal displacement.\label{car_diagram}}
\vspace{-0.3cm}
\end{wrapfigure}

\paragraph{Model \& Expert} Vehicle platoon control is a problem that has been studied using control theory (e.g. \cite{platoon}), but here it is demonstrated that a safe, stabilizing controller can be learned from examples of vehicles driving in formation. Figure \ref{car_diagram} shows an illustration of a platoon of $n_v$ vehicles for which the objective is to stabilize the relative longitudinal positions of each vehicle to the steady-state conditions $y_i - y_{i-1} \to y_{ss}$ and $\dot{y}_i - \dot{y}_{i-1} \to 0$ $\forall i$, subject to the hard constraint that relative position of the vehicles is never lower than a safe threshold $y_i - y_{i-1} \geq \underline{y}$ $\forall{i}$, and that the vehicles' ability to brake and accelerate is constrained by $b \leq \ddot{y}_i \leq a$ $\forall i$ where $b < 0 < a$ (note that only the relative positions and velocities of the vehicles is considered, as the global position and velocity of the platoon can be controlled separately by adding an equal perturbation to each element of $\ddot{y}$). In appendix \ref{sec_plat_model} it is shown that this can be modelled as a discrete time LTI system.  
% by
% \begin{figure}
% \begin{center}
% \input{car_diagram.pdf_tex}
% \caption{Diagram of platoon of $n_v$ vehicles in 1 degree of freedom.}
% \label{car_diagram}
% \end{center}
% \vspace{-0.35cm}
% \end{figure}
% 
% \begin{equation}\label{discrete_time_platoon}
% x_{t + d_t} = \begin{bmatrix}
% \mathbf{I} & dt \mathbf{I}  \\
% \mathbf{O} & \mathbf{I}
% \end{bmatrix} x_t + \begin{bmatrix}
% \frac{1}{2}  \hat{B}_c (dt)^2 \\ \hat{B}_c dt
% \end{bmatrix} u_t, \quad \hat{B}_c = \begin{bmatrix}
% -1 & 1 \\
% & \ddots & \ddots \\
% & & -1 & 1
% \end{bmatrix}
% \end{equation}
%
% where $\hat{B}_c$, $x_t \in \mathbb{R}^{2(n_v -1)}$, and $u \in \mathbb{R}^{n_v}$ are defined in appendix \ref{sec_plat_model}, and are subject to the constraints 
% $$
% x_t \geq \begin{bmatrix}
% (\underline{y} - y_{ss}) \mathbf{1}_{n_v-1} \\ - \infty 
% \end{bmatrix}, \quad \text{and} \quad b \textbf{1} \leq u_t \leq a \textbf{1} \quad \forall t.
% $$
Expert data was generated from the model with  $n_v = 10$ vehicles so that $x_t \in \mathbb{R}^{18}$ and $u_t \in \mathbb{R}^{10}$. 20 instances were generated using random feasible initial conditions with $y_{ss} = 30\ $m and $\underline{y} = 10\ $m, and then simulated for $20 \ $s in time intervals of $dt = 0.7 \ $s with an infinite-horizon MPC controller, using $Q = \mathbf{I}_n$ and $R = 2\mathbf{I}_m$.

\vspace{-0.2cm}
\paragraph{Learning}
The learner and expert shared all system and controller information apart from the cost matrices $Q$ and $R$, which were learned, and the MPC horizon length, which was implemented as each of $N \in \{ 5,10,15,20\}$ in four separate experiments. The matrices $Q$ and $R$ were initialized as completely random diagonal matrices with each element uniformly distributed in the interval $[0,3]$, and the diagonal structure was maintained through training. 500 training iterations were used; otherwise the learning process (loss function, learning rate, etc...) was the same as in Section \ref{sec_MSD}. 

\begin{wrapfigure}{R}{0.57\textwidth}
\vspace{-0.2cm}
\centering
    \includegraphics[scale=0.83]{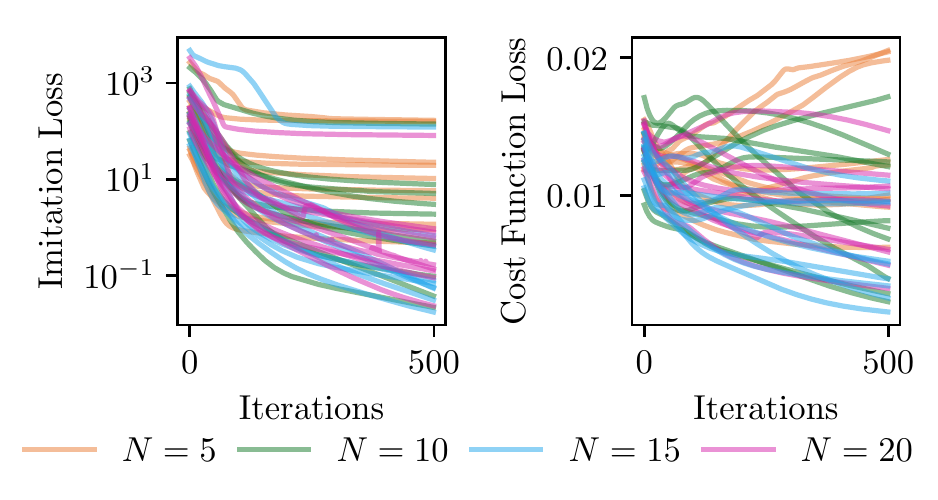}
    \vspace{-0.5cm}
    \caption{{\bf Vehicle platooning. Imitation loss and cost function loss at each iteration of the training process.} Left: imitation loss. Right: model loss given by $\| \vec Q - \vec Q_j\|^2_2 + \| \vec R - \vec R_j\|^2_2$, where $Q$ and $R$ are the correct cost matrices and $Q_j$ and $R_j$ are the cost matrices at iteration $j$.}
    \label{fig:loss2}
    \vspace{-0.2cm}
\end{wrapfigure}
\vspace{-0.2cm}
\paragraph{Results}
Figure \ref{fig:loss2} shows the imitation and cost function losses at each of the 500 optimization iterations for each of the tested horizon lengths and initial conditions. As with the mass-spring-damper experiments, it is suggested that a longer prediction horizon improves training as the imitation loss generally converges to a lower value for the examples with $N \in \{15,20\}$, but only convergence to a local minimum is achieved in general. The cost error also does not converge in general (although better convergence is observed again for the longer horizon lengths), however for this learning problem there is a manifold of matrices $Q$ and $R$ with the same minimizing argument, so divergence of the cost error does not necessarily indicate that the learned cost function is `incorrect'. Furthermore, in this case the model is known exactly, so the closed-loop infinite-horizon properties can be obtained even without the correct cost function. 

\begin{figure}[b!]
\centering
    \vspace{-0.5cm}
    \includegraphics[scale=0.83, clip]{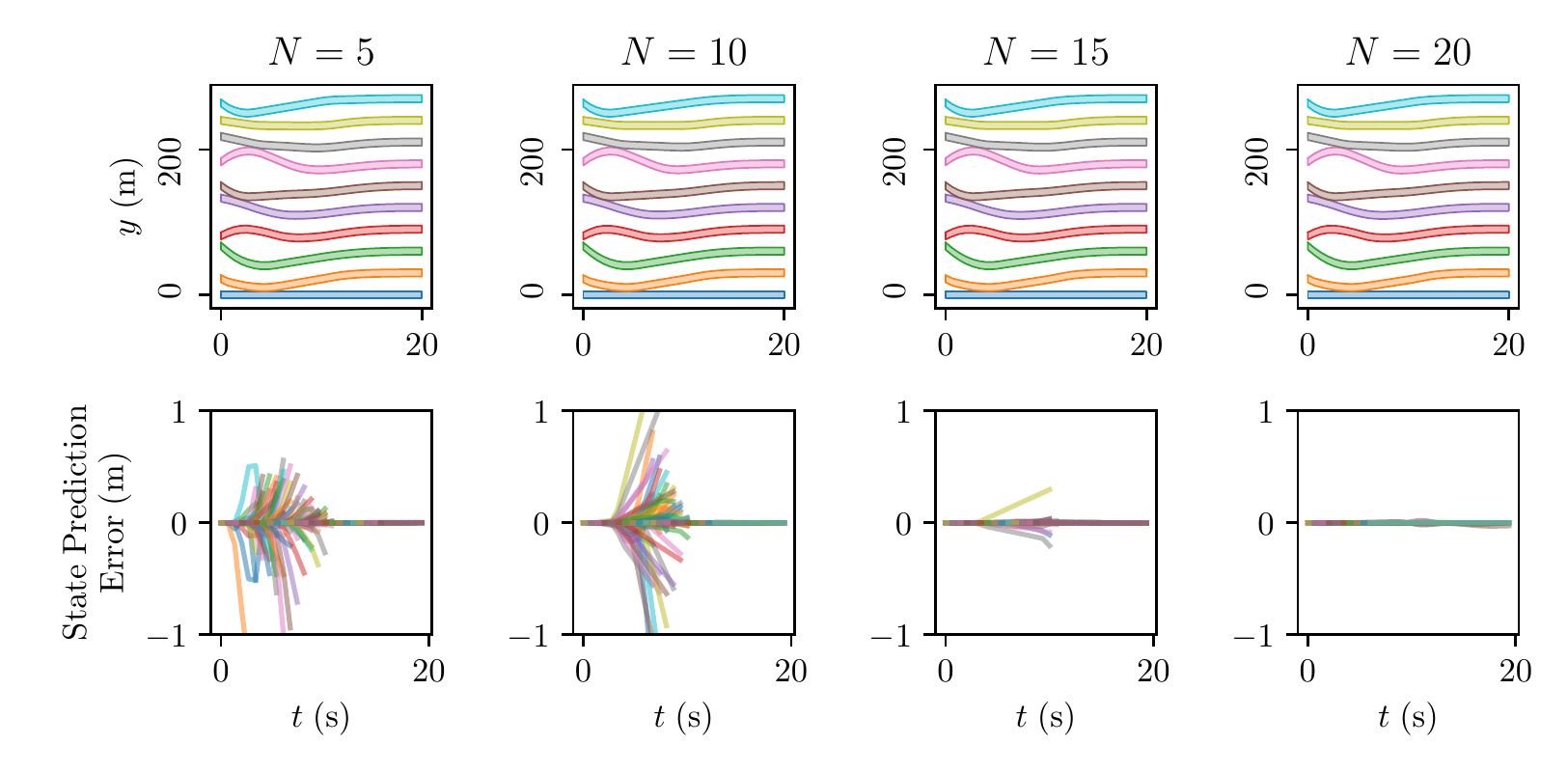}
    \vspace{-0.3cm}
    \caption{{\bf Vehicle platooning. Closed loop simulation and prediction error for all horizon lengths.} Top row: closed loop simulation where each shaded region is the safe separation distance for each vehicle. Bottom row: prediction error given by $x_{[t:t+N]} - \hat{x}_t $, where $\hat{x}$ is the state trajectory predicted by the MPC at time $t$.}
    \label{fig:platoon_training}
    \vspace{-0.cm}
\end{figure}

Figure \ref{fig:platoon_training} shows the model simulated from the same initial condition in closed loop using a learned controller for each of the horizon lengths, together with the error between the MPC state predictions and ensuing closed-loop behaviour. All of the controllers are observed to successfully satisfy the hard constraints on vehicle separation, and all converge to the correct steady-state vehicle separation. The differences between the prediction capabilities of the controllers is highlighted by the state prediction errors, and it can be seen that for $N=20$ the state predictions match the ensuing behaviour, indicating that the infinite horizon cost is being used and that closed-loop stability is guaranteed, even without the use of a terminal constraint set. It is also demonstrated for $N < 20$  that the largest errors occur from predictions made at times when the state constraints are active, suggesting that these controllers deviate from their predictions to satisfy the constraints at later intervals.

\vspace{-0.2cm}
\subsection{Limitations}\label{sec:limitations}

The above approach is limited in scope to LTI systems, and a more comprehensive solution would cover linear time varying systems (for which the MPC is still obtained from the solution of a QP). In this case the infinite horizon cost cannot be obtained from the solution of the DARE, and the extension of the methods presented in this paper to time varying or non-linear models is non-trivial (see Appendix \ref{app:nonlinear} for further discussion). Additionally, the derivative of the DARE in Proposition \ref{proposition_DARE_diff} involves multiple Kronecker products and matrix inversions (including an $n^2 \times n^2$ matrix) that do not scale well to large state and control dimensions, although the dynamics of physical systems can usually be reasonably approximated with only a few tens of variables, so this may not become an issue in practice. The algorithm also requires a stabilizing solution of the DARE to exist; theories for the existence of stabilizing solutions are non-trivial (e.g. \citealp{RAN198863}), and it is not immediately obvious how to enforce their existence throughout the training process (stabilizibility can be encouraged using the one-step ahead term in (\ref{eq:imitation_loss_mixed})).

\begin{comment}
\section{Conclusion}
This work presents a method to differentiate through an infinite-horizon linear quadratic MPC, where the solution of the DARE was used to compute a terminal cost from the MPC optimization problem. The final control sequence is obtained from the solution of a QP that is structured so that its always both well-conditioned and feasible, and the whole forward pass is end-to-end differentiable,  so can be included as a layer in a neural network architecture. The approach is demonstrated on two sets of imitation learning experiments, it is shown that a short prediction horizon can be found such that the resulting MPC is stabilizing and infinite-horizon optimal.      
\end{comment}

\section*{Acknowledgments}
The authors are grateful to Brandon Amos for providing support using his differentiable QP tool (\texttt{https://github.com/locuslab/optnet}) in the preliminary work for this project (all of the methods presented in this paper were developed independently).   

\bibliographystyle{iclr2020_conference}
\bibliography{bibli.bib}

\newpage

\section*{Appendices}
\appendix
\section{MPC quadratic program}\label{appendix_MPC_1}
Problem (\ref{MPC_1}) is equivalent to 
\begin{equation*}
\begin{aligned}
z^{\star} = \underset{z}{\text{argmin}} \ & \frac{1}{2} z^\top \begin{bmatrix} \mathbf{R} + \Psi^\top \mathbf{Q} \Psi \\ & \mathbf{O}_{Nm \times Nm} \\ && \mathbf{O}_{Nn \times Nn} \end{bmatrix}  z +  \begin{bmatrix} \Psi \mathbf{Q} \Phi x_t \\ k_u \mathbf{1}_{Nm} \\ k_x \mathbf{1}_{Nn} \end{bmatrix}^\top z  \\
\text{s.t.} \ & \begin{bmatrix} \underline{\mathbf{u}} \\ -\infty \\ 0_{Nm} \\ \underline{\mathbf{x}} - \Phi x_t \\ -\infty \\ 0_{Nn}  \end{bmatrix} \leq \begin{bmatrix} \mathbf{I}_{Nm} & \mathbf{I}_{Nm} & \\ \mathbf{I}_{Nm} & -\mathbf{I}_{Nm} & \\ & \mathbf{I}_{Nm} & \\ \Psi & & \mathbf{I}_{Nn} \\ \Psi & & -\mathbf{I}_{Nn} \\ & & \mathbf{I}_{Nn} \end{bmatrix} z \leq  \begin{bmatrix} \infty \\ \overline{\mathbf{u}} \\ \infty \\ \infty  \\ \overline{\mathbf{x}} - \Phi x_t \\ \infty \end{bmatrix},
\end{aligned}
\end{equation*}
where
\begin{gather*}
z = \begin{bmatrix} \hat{u} \\ r \\ s \end{bmatrix}, \quad \mathbf{R} = \begin{bmatrix} R \\ & \ddots \\ & & R \end{bmatrix}, \quad \mathbf{Q} = \begin{bmatrix} Q \\ & \ddots \\ & & Q \\ & & & Q_N \end{bmatrix}, \quad \Phi = \begin{bmatrix} A \\ \vdots \\ A^{N} \end{bmatrix}, \quad \\ \\
\Psi = \begin{bmatrix} B \\ \vdots & \ddots \\ A^{N-1} B & \cdots & B \end{bmatrix}, \quad 
\underline{\mathbf{x}} = \begin{bmatrix} \underline{x} \\ \vdots \\ \underline{x} \end{bmatrix}, \quad \overline{\mathbf{x}} = \begin{bmatrix} \overline{x} \\ \vdots \\ \overline{x} \end{bmatrix}, \quad \underline{\mathbf{u}} = \begin{bmatrix} \underline{u} \\ \vdots \\ \underline{u} \end{bmatrix}, \quad \overline{\mathbf{u}} = \begin{bmatrix} \overline{u} \\ \vdots \\ \overline{u} \end{bmatrix},
\end{gather*}
are of conformal dimensions. Using the above, problem (\ref{MPC_2}) is then equivalent to
\begin{equation*}
\begin{aligned}
z^{\star} = \underset{z}{\text{argmin}} \ & \frac{1}{2} z^\top \begin{bmatrix} (\mathbf{K} \hat{\Psi} + \mathbf{I}_{Nm})^\top \mathbf{R} (\mathbf{K} \hat{\Psi} + \mathbf{I}_{Nm}) + \hat{\Psi}^\top \hat{\mathbf{Q}} \hat{\Psi} \\ & \mathbf{O}_{Nm \times Nm} \\ && \mathbf{O}_{Nn \times Nn} \end{bmatrix}  z \\
& +  \begin{bmatrix} (\mathbf{K}^\top \mathbf{R} (\mathbf{K} \hat{\Psi} + \mathbf{I}_{Nm} ) + \hat{\mathbf{Q}} \hat{\Psi} )^\top \hat{\Phi} x_t \\ k_u \mathbf{1}_{Nm} \\ k_u \mathbf{1}_{Nn} \end{bmatrix}^\top z  \\
\text{s.t.} \ & \begin{bmatrix}  \underline{\mathbf{u}} - \mathbf{K} \hat{\Phi} x_t \\ -\infty \\ \mathbf{0}_{Nm} \\ \underline{\mathbf{x}} - \Phi x_t \\ -\infty \\ \mathbf{0}_{Nn}  \end{bmatrix} \leq \begin{bmatrix} (\mathbf{K} \hat{\Psi} + \mathbf{I}_{Nm}) & \mathbf{I}_{Nm} & \\ (\mathbf{K} \hat{\Psi} + \mathbf{I}_{Nm}) & -\mathbf{I}_{Nm} & \\ & \mathbf{I}_{Nm} & \\ \Psi & & \mathbf{I}_{Nn} \\ \Psi & & -\mathbf{I}_{Nn} \\ & & \mathbf{I}_{Nn} \end{bmatrix} z \leq  \begin{bmatrix} \infty \\ \overline{\textbf{u}} - \mathbf{K} \hat{\Phi} x_t \\ \infty \\ \infty  \\ \overline{\mathbf{x}} - \Phi x_t \\ \infty \end{bmatrix},
\end{aligned}
\end{equation*}
where now
$$
z = \begin{bmatrix} \delta \hat{u} \\ r \\ s \end{bmatrix}, \quad \Phi = \begin{bmatrix} (A + BK) \\ \vdots \\ (A + BK)^{N} \end{bmatrix} \quad \text{and} \quad \Psi = \begin{bmatrix} B \\ \vdots & \ddots \\ (A+ BK)^{N-1} B & \cdots & B \end{bmatrix},
$$
and
\begin{align*}
\hat{\mathbf{Q}} = \begin{bmatrix} \mathbf{O}_{n \times n} \\ & \mathbf{Q} \end{bmatrix}, \quad \hat{\Phi} = \begin{bmatrix} \mathbf{I}_n \\ \Phi \end{bmatrix}, \quad \hat{\Psi} = \begin{bmatrix} \mathbf{O}_{n \times Nn} \\ \Psi \end{bmatrix}, \quad  \mathbf{K} = \begin{bmatrix} K \\ & \ddots & & \mathbf{O}_{Nm \times n} \\ & & K & \end{bmatrix},
\end{align*}
are of conformal dimensions.

\section{OSQP derivatives}\label{appendix_OSQP}
OSQP solves quadratic programs of the form (\ref{QP}), and returns values for $z$, $y$, and $s$ that satisfy
\begin{align*}
& Mz = s, \\
& Hz + q - M^\top y = 0, \\
& s \in \mathcal{C}, \quad y \in \mathcal{N}_\mathcal{C}(s),
\end{align*}
\cite[\S 2]{osqp}, where $\mathcal{C}$ is the set $\{ s: l \leq s \leq u \}$, and $\mathcal{N}_\mathcal{C}$ is the normal cone of $\mathcal{C}$. The values of $y$ that are returned by the solver can be used to determine whether the constraints are strongly active at the solution, where $y_i = 0$ indicates that the constraints $l_i \leq M_i z$ and $M_i z \leq u_i$ are inactive, $y_i > 0$ indicates that $M_i z \leq u_i$ is strongly active, and $y_i < 0$ indicates that $l_i \leq M_i z$ is strongly active. The solution can therefore be completely characterised by the KKT system
\begin{equation}\label{KKT_LSE}
\begin{bmatrix}
H & M_\mathcal{U}^\top & M_\mathcal{L}^\top \\
M_\mathcal{U} \\
M_\mathcal{L}
\end{bmatrix}
\begin{bmatrix}
z \\ y_\mathcal{U} \\ y_\mathcal{L}
\end{bmatrix} = \begin{bmatrix}
q \\ u_\mathcal{U} \\ l_\mathcal{L}
\end{bmatrix}
\end{equation}
where $\mathcal{U} = \{ i:y_i>0 \}$ and $\mathcal{L} = \{ i:y_i < 0 \}$, and the notation $M_\mathcal{S}$ indicates a matrix consisting of the $i \in \mathcal{S}$ columns of given matrix $M$, and $v_\mathcal{S}$ indicates a vector consisting of the $i \in \mathcal{S}$ elements of given vector $v$. Equation (\ref{KKT_LSE}) can then be differentiated using the techniques detailed in \cite[\S 3]{amos_optnet:_2017}.

\section{Proof of Proposition \ref{th:stability}}\label{appendix_proofs}
\begin{proof}({\bf Proposition \ref{th:stability}})  
The first point follows from \citep{Scokaert1998}. The next two points of Proposition \ref{th:stability} stem from the results in \citep{Limon2003StableCM,Limon2009}. In particular, the closed-loop is Lipschitz since the model is linear and the controller is the solution of a strictly convex QP. Moreover, the LQR provides a contractive terminal set.   The final point follows from the fact that $(A+BK)^N$ has eigenvalues in the unit circle, $\forall N\geq 1$. Proof of point 4 is concluded by inspection of the QP matrices (Appendix \ref{appendix_MPC_1}) and by application of Theorem 5.6.12, page 298 of \cite{Horn:2012:MA:2422911} which states that, given a bound, $\overline{\rho}$, on the spectral radius, then there exists a matrix norm which is also less than $\overline{\rho}$.     
\end{proof}

\section{Proof of Proposition \ref{proposition_DARE_diff}} \label{appendix_DARE_derivative}
\begin{proof} ({\bf Proposition \ref{proposition_DARE_diff}})  
If a stabilizing solution ($\rho(A+BK)\leq 1$) to (\ref{DARE}) exists, it is unique \cite[Proposition 1]{IONESCU1992229}, and the DARE can therefore be considered an implicit function of $A$, $B$, $Q$, and $R$. Using the assumption that $(R + B^\top P B)^{-1}$ exists, it can be concluded that $Z_1, \dots, Z_5$ and $M_1,M_2,M_3$ exist (the Kronecker product and matrix addition, subtraction, and multiplication always exist). Equation (\ref{DARE}) can be given by
\begin{equation}\label{DARE_PROOF}
P = A^\top M_1 A + Q,
\end{equation}
which is differentiable, and $M_1,M_2,M_3$ are also differentiable. Differentials are taken for (\ref{DARE_PROOF}) and each of $M_1,M_2,M_3$ as
\begin{align*}
\d \v P =&  ( \mathbf{V}_{n,n} + \mathbf{I}_{n^2}) (\mathbf{I}_n \otimes A^\top M_1) \d \v A + (A^\top \otimes A^\top ) \d \v M_1 + \d \v Q \\
\d \v M_1 =& \left[ \mathbf{I}_{n^2} - (PBM_2 B^\top \otimes \mathbf{I}_n ) - (\mathbf{I}_n \otimes PBM_2 B^\top ) \right] \d \v P  \\
& - (PB \otimes PB) \d \v M_2 - (\mathbf{I}_{n^2} + \mathbf{V}_{n,n})(PBM_2 \otimes P) \v \d B \\
\d \v M_2 =& - (M_2 \otimes M_2 )\d \v M_3 \\
\d \v M_3 =& \d \v R + (B^\top \otimes B^\top ) \d \v P + (\mathbf{I}_{m^2} + \mathbf{V}_{m,m})(\mathbf{I}_{m} \otimes B^\top P) \v \d B,
\end{align*}
then these can be combined using the differential chain rule \cite[Theorem 18.2]{magnus99} to obtain
$$
Z_1 \d \v P = Z_2 \d \v A + Z_3 \d \v B + Z_4 \d \v Q + Z_5 \d \v R.
$$
The Jacobians, as defined in Proposition \ref{proposition_DARE_diff}, therefore exist if $Z_1^{-1}$ exists. 
\end{proof}

\newpage

 \section{Verification and reduction of the prediction horizon}\label{sec:horizon_reduction}
 \begin{wrapfigure}{r}{8.cm}
\vspace{-0.8cm}
    \noindent\begin{minipage}{0.55\columnwidth}
        \begin{algorithm}[H] 
            \DontPrintSemicolon
            \KwInput{$N>0$, $\mathcal{X}_0\subseteq\mathbb{X}$, $\mathcal{M}$, $(P, K)$ from (\ref{LQR}-\ref{DARE}), $\epsilon>0$, $n_s>0$, $\eta \in(0,1)$.}
            \KwOutput{$\bar{N}, \mathcal{X}$}
            \small
            \caption{MPC horizon verification and reduction}
            \label{alg:horizon_reduction}
            $\mathcal{X}\leftarrow \mathcal{X}_0$ \;
            \While{$\mathcal{X}\supset \emptyset$}{
                $\bar{N} \leftarrow N$ \; 
                \While{$\bar{N}>0$}{
                        \text{$\mathcal{X}_{\text{sample}} \leftarrow$ \text{ $n_s$ uniform state samples, s.t.: $x\in\mathcal{X}$} }  \ %\\
                        \text{$\delta \hat{u}^\star\leftarrow$  \text{ Solution of MPC QP  (\ref{stable_system_dynamics}-\ref{QP}), $\forall x\in\mathcal{X}_{\text{sample}}$}} \ %\\
                        \If{$\|\delta \hat{u}^\star_k(x)\|\leq \epsilon, \forall k\geq{\bar{N}},\ \forall x\in \mathcal{X}_{\text{sample}}$}{
                           \textbf{return} \text{TRUE}\;
                        }
                 $\bar{N} \leftarrow \bar{N}-1$     \;  
                }
            $\mathcal{X}\leftarrow \eta \mathcal{X}$  \;  
            }
        \textbf{Procedure failed}\; %\\
        \text{$N\leftarrow N+1$}\; %\\
        \text{{Go to}  Algorithm \ref{alg:alternateDescent}}\; 
        \end{algorithm}
    \end{minipage}
    \vspace{-0.2cm}       
\end{wrapfigure}

A method is proposed for the reduction of the MPC prediction horizon after imitation learning. The idea is to be able to reproduce the infinite-horizon optimal MPC up to a tolerance $\epsilon$ with high probability. Do do so, we check that, for a candidate horizon $\bar{N}$,  the MPC action deltas, $\delta \hat{u}^\star_k$, satisfy
$\|\delta \hat{u}^\star_k\|\leq \epsilon$, for all $k\geq{\bar{N}}$. 
This means that the optimal action is equal to the LQR up to a tolerance $\epsilon$. In order to provide a high probability guarantee of this condition, we propose the use of a probabilistic verification approach, similar to \cite{bobiti_samplingdriven_nodate}. This is described in Algorithm \ref{alg:horizon_reduction}. In particular, the condition is checked on a high number, $n_s$, of initial states. These states are sampled uniformly from a set of interest $\mathcal{X}$, which can be either the state constraints $\mathbb{X}$ or an estimate of the region of attraction, $\Gamma_N$. If verified, this set is a region of attraction for the system with high probability. The relationship between the number of samples and the verification probability is discussed in \cite[Chapter 5]{bobiti_samplingdriven_nodate}. The algorithm also checks whether the infinite horizon condition has been reached for the $N$ used during training. Finally, a line search for a suitable $\mathbb{X}$ is proposed using a scaling factor $\eta\in(0,1)$. In particular, the initial set is downscaled until either an horizon is found or the set becomes empty. In latter case the search fails and the procedure returns to the training algorithm with an increased $N$. Noticeably, the proposed algorithm does not require to explicitly compute the \emph{terminal set} in which the LQR is invariant and it could be used also for non-linear MPC if an infinite-horizon (or a stabilising) terminal controller is available.

\section{Platoon Model Derivation}\label{sec_plat_model}
The problem described in Section \ref{sec_main_vehicle_platoon} can be decomposed into the regulation problem
\begin{align*}
& \begin{bmatrix}
y_2 - y_1 \\ \vdots \\ y_n - y_{n-1}
\end{bmatrix} = \begin{bmatrix}
z_2 - z_1 \\ \vdots \\ z_n - z_{n-1}
\end{bmatrix} + \textbf{1} y_{ss}, \quad \begin{bmatrix}
z_2 - z_1 \\ \vdots \\ z_n - z_{n-1}
\end{bmatrix} \to \textbf{0}, 
\end{align*}
subject to the constraints
$$
\begin{bmatrix}
z_2 - z_1 \\ \vdots \\ z_n - z_{n-1}
\end{bmatrix} \geq \textbf{1} (\underline{y} - y_{ss}), \quad \text{and} \quad \textbf{1}b \leq \ddot{y} \leq \textbf{1}a.
$$
If each vehicle is modelled as a mass then a continuous-time LTI state space model can be formed as  
\begin{equation}\label{platoon_model}
\underbrace{\begin{bmatrix}
\dot{z}_2 - \dot{z}_1 \\
\vdots \\
\dot{z}_n - \dot{z}_{n-1} \\
\ddot{z}_2 -  \ddot{z}_1 \\
\vdots \\
\ddot{z}_n - \ddot{z}_{n-1}
\end{bmatrix}}_{\dot{x}} = 
\underbrace{\begin{bmatrix}
& & & 1 \\
& & & & \ddots \\
& & & & & 1 \\
&\\
\\
\\
\end{bmatrix}}_{A_c}
\underbrace{\begin{bmatrix}
{z}_2 - {z}_1 \\
\vdots \\
{z}_n - {z}_{n-1} \\
\dot{z}_2 -  \dot{z}_1 \\
\vdots \\
\dot{z}_n - \dot{z}_{n-1}
\end{bmatrix}}_x + \underbrace{\begin{bmatrix}
\\
\\
\\
-1 & 1 \\
& \ddots & \ddots \\
& & -1 & 1
\end{bmatrix}}_{B_c}
\underbrace{\begin{bmatrix}
\ddot{y}_1 \\ \vdots \\ \ddot{y}_n
\end{bmatrix}}_u,
\end{equation}
which can then be given as
$$
\dot{x} = \begin{bmatrix}
0 & \mathbf{I} \\ 0 & 0
\end{bmatrix}x + \begin{bmatrix}
0 \\ \hat{B}
\end{bmatrix} u.
$$
If it is assumed that the control input is constant between sampling intervals $t$ and $t+ d_t$, then this can be given in discrete time as

\begin{equation}\label{discrete_time_platoon}
x_{t + d_t} = \begin{bmatrix}
\mathbf{I} & dt \mathbf{I}  \\
\mathbf{O} & \mathbf{I}
\end{bmatrix} x_t + \begin{bmatrix}
\frac{1}{2}  \hat{B}_c (dt)^2 \\ \hat{B}_c dt
\end{bmatrix} u_t, \quad \hat{B}_c = \begin{bmatrix}
-1 & 1 \\
& \ddots & \ddots \\
& & -1 & 1
\end{bmatrix}
\end{equation}

where $x_t \in \mathbb{R}^{2(n_v -1)}$, and $u \in \mathbb{R}^{n_v}$ and are subject to the constraints 
$$
x_t \geq \begin{bmatrix}
(\underline{y} - y_{ss}) \mathbf{1}_{n_v-1} \\ - \infty 
\end{bmatrix}, \quad \text{and} \quad b \textbf{1} \leq u_t \leq a \textbf{1} \quad \forall t.
$$

\section{Nonlinear models} \label{app:nonlinear}
As discussed in the main paper, our approach is currently limited to Linear Time Invariant (LTI) systems. In general, conditions for infinite-horizon optimality of systems that are not LTI are non-trivial. Some of the results on MPC stability could however be maintained, for example in the case when the LQR value function, $x^\top P x$, is a local control Lyapunov function \citep{khalil2001,rawlings_mayne_paper}. In this case, the stability and intrinsic robustness results are maintained (see \citealp{Limon2003StableCM,Limon2009}). For these system, it would be possible to use our method, for instance in combination with \cite{amos_differentiable_2018}, to provide a stable Non-linear MPC. This is however a big assumptions for systems that are very non-linear. Assessing this LQR controllability condition could be done, for instance, by training a local linear model around the target equilibrium (origin) and then checking whether the DARE is solvable. This should be performed before starting the imitation learning. We leave the study of more general systems to future work.

\end{document}